\newtheorem{theorem}{Theorem}[section]
\newtheorem{lemma}[theorem]{Lemma}
\renewcommand{\@makefntext}[1]{#1}
\begin{document}

\author{Rafa{\l} Kapica, Janusz Morawiec}
\email[R. Kapica]{rkapica@math.us.edu.pl}
\title[Integrable solutions of inhomogeneous refinement equations]{Integrable solutions of inhomogeneous refinement type equations on intervals}
\email[J. Morawiec]{morawiec@math.us.edu.pl}
\date{}
\address{Institute of Mathematics, University of Silesia, Bankowa 14, 40-007 Katowice, Poland}

\keywords{Inhomogeneous refinement equations, integrable solutions, probability distribution functions, Radon-Nikodym derivatives}
\subjclass[2010]{37H99, 37N99, 39B12, 39B22}

\begin{abstract}
Given a probability measure $P$ on a $\sigma$-algebra of subsets of a set $\Omega$, an interval $I\subset\mathbb R$, $g\in L^1(I)$, and a function $\varphi\colon I\times\Omega\to I$ fulfilling some conditions we obtain results on the existence of solutions $f\in L^1(I)$ of the inhomogeneous refinement type equation 
$$
f(x)=\int_{\Omega}\big|\varphi'_x(x,\omega)\big|f(\varphi(x,\omega))dP(\omega)+g(x).
$$
\end{abstract}

\maketitle


\section{Introduction}

A discrete inhomogeneous refinement equation 
$$
f(x)=\sum_{n\in\mathbb Z}c_nf(kx-n)+g(x)
$$ 
as well as its continuous counterpart 
$$
f(x)=\int_{\mathbb R}f(kx-y)d\mu(y)+g(x)
$$ 
were studied in many papers. For example, the discrete inhomogeneous refinement equation has been used in \cite{GHM1994} for a construction of  multiwavelets from a "fractal equation", in \cite{SN1996} for a construction of boundary wavelets, in \cite{SZ1998} for the study of convergence of cascade algorithms and in \cite{JL2012} for the study of quad/triangular subdivision. A complete uniform characterization of the existence of distributional solutions of both the above classical inhomogeneous refinement equations  was obtained in \cite{JJS2000} to cover all cases of interest. 

A common extension of both the above inhomogeneous refinement equations is the following  inhomogeneous refinement type equation
\begin{equation}\label{e}
f(x)=\int_\Omega|\varphi'_x(x,\omega)|f(\varphi(x,\omega))P(d\omega)+g(x).
\end{equation}
Integrable solutions of the poly-scale version of equation (\ref{e}) have been recently investigated by the use of the Banach fixed point theorem  in \cite{KM}. In the present paper we are interested in integrable solutions $f\colon I\to\mathbb R$ of equation (\ref{e}) assuming that $(\Omega,\mathcal A,P)$ is a  complete probability space, $I\subset\mathbb R$ is an open interval (finite or infinite; possible equals to the whole real line), $g\in L^1(I)$ and  $\varphi\colon I\times\Omega\to I$ is a function satisfying the following conditions:
\begin{itemize}
\item[(a)] $\varphi(\cdot,\omega)$ is a diffeomorphism from $I$ onto $I$ for every $\omega\in\Omega$; 
\item[(b)] $\varphi(x,\cdot)$ is $\mathcal A$-measurable for every $x\in I$;
\item[(c)] $(\lambda\otimes  P)(\varphi^{-1}(B))=0$ for every Borel set $B\in{\mathcal B}(I)$ of Lebesgue measure $\lambda$ equals zero.
\end{itemize}

The reader may ask why we choose the interval $I$ to be open. No special reason. In fact all the presented below results can be reformulated if we start with $I$ to be closed or closed on one side.


\section{Preliminaries} 

We begin by observing that if $g\colon I\to\mathbb R$ is a representative of $g\in L^1(I)$ and $f,\widetilde{f}\colon I\to\mathbb R$ are two representatives of a function from $L^1(I)$ such that $(\ref{e})$ holds for almost all $x\in I$, then $\widetilde{f}$ also satisfies (\ref{e}) for almost all $x\in I$; see \cite{KM2008} for details in the homogeneous case of equation (\ref{e}) with $I=\mathbb R$. This observation allows us to accept the following definition: A function $f\in L^1(I)$ is said to be an $L^1(I)$-solution of equation (\ref{e}), if every representative of $f$ satisfies (\ref{e}) for almost all $x\in I$ with respect to the Lebesgue measure.

Let $X$ be a separable metric space. We say that $\psi\colon X\times\Omega\to X$ is a random-valued function, if it is measurable with respect to the product $\sigma$-algebra $\mathcal B(X)\otimes\mathcal  A$. Following \cite{BK1977}, we define the sequence $(\psi^n)_{n\in\mathbb N}$ of iterates of a random-valued function $\psi\colon X\times\Omega\to X$ as follows:
$$
\psi^1(x,\varpi)=\psi(x,\omega_1)\hspace{5ex}\hbox{and}\hspace{5ex}
\psi^{n+1}(x,\varpi)=\psi(\psi^n(x,\varpi),\omega_{n+1})
$$
for all $x\in X$ and $\varpi=(\omega_1,\omega_2,\dots)\in\Omega^{\infty}$.  Since $\psi^n(\cdot,\varpi)$ depends only on the first $n$ coordinates of $\varpi\in\Omega^\infty$, we may consider the iterate $\psi^n$ as a function defined on $X\times\Omega^{\infty}$ or, alternatively, on $X\times\Omega^n$. 
The basic property of iterates of rv-functions says that they are random-valued functions on the product probability space $(\Omega^\infty,{\mathcal A}^\infty, P^\infty)$.

It turns out that the above defined iterates form a random dynamical system (see \cite{A1995}) and a Markov chain (see \cite{MT1993}). Moreover, iteration is the fundamental technique for solving functional equations, and iterates usually appear in the formulas for solutions (see \cite{KCG1990}). In this paper we will apply a result on the convergence in law of the sequence of iterates of a random-valued function  obtained in \cite{B2009}.


\section{Compactly supported $L^1(\mathbb R)$-solutions} 

In many applications compactly supported solutions of inhomogeneous, as well as homogeneous, refinement equations play an important role. Compactly supported distributional solutions of inhomogeneous refinement equations were considered, among others,  in \cite{DH1998, DH1999, S2001}. In this section we are interested in compactly supported $L^1(\mathbb R)$-solutions of equation (\ref{e}). We begin with the following useful lemma.

\begin{lemma}\label{lem2}
Let $J\subset\mathbb R$ be an open interval and let $\alpha\colon J\to I$ be a diffeomorphism  $($onto $I$$)$. Then:
\begin{itemize}
\item[\rm (i)] The function $\phi\colon J\times\Omega\to J$ given  by 
\begin{equation}\label{phi}
\phi(x,\omega)=\alpha^{-1}(\varphi(\alpha(x),\omega))
\end{equation} 
satisfies conditions {\rm (a)--(c)} with $I$ replaced by $J$;
\item[\rm (ii)] If $f$ is an  $L^1(I)$-solution of equation $(\ref{e})$, then $\widetilde{f}=|\alpha'|\!\cdot\! f\circ\alpha$ is an $L^1(J)$-solution of the equation
\begin{equation}\label{e0}
\widetilde{f}(x)=\int_\Omega|\phi'_x(x,\omega)|\widetilde{f}(\phi(x,\omega))P(d\omega)+\widetilde{g}(x),
\end{equation}
where $\widetilde{g}=|\alpha'|\!\cdot\! g\circ\alpha$.
\end{itemize}
\end{lemma}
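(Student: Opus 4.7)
\medskip

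\noindent\textbf{Proof plan.}
The plan is to handle parts (i) and (ii) in order, deriving everything from the identity $\phi = \alpha^{-1}\circ\varphi(\cdot,\omega)\circ\alpha$ and the chain rule.

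For part (i), each of (a)--(c) follows routinely from the corresponding property for $\varphi$. Property (a) is immediate: for fixed $\omega$, $\phi(\cdot,\omega)$ is the composition of three diffeomorphisms on $J \to I \to I \to J$. Property (b) follows because $\phi(x,\cdot) = \alpha^{-1}\circ\varphi(\alpha(x),\cdot)$ is the composition of the $\mathcal A$-measurable map $\varphi(\alpha(x),\cdot)$ with the continuous map $\alpha^{-1}$. For property (c), the key observation is that $\phi^{-1}(B) = \{(x,\omega) : \varphi(\alpha(x),\omega) \in \alpha(B)\}$. Since $\alpha$ is a diffeomorphism it maps Lebesgue-null sets to Lebesgue-null sets, so $\alpha(B)$ is Borel and null; by property (c) for $\varphi$, the set $A=\varphi^{-1}(\alpha(B))$ has $(\lambda\otimes P)$-measure zero. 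I would then conclude by Fubini: for $P$-a.e.\ $\omega$ the section $A_\omega$ is Lebesgue-null, hence its preimage $\alpha^{-1}(A_\omega)$ under the diffeomorphism $\alpha$ is also Lebesgue-null, which after integrating in $\omega$ gives $(\lambda\otimes P)(\phi^{-1}(B))=0$.

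For part (ii), integrability of $\widetilde f$ and $\widetilde g$ is immediate from the change of variables $y=\alpha(x)$: $\int_J|\alpha'(x)||f(\alpha(x))|\,dx=\int_I|f(y)|\,dy<\infty$. The heart of the argument is the algebraic identity
\[
|\alpha'(x)|\cdot|\varphi'_x(\alpha(x),\omega)| \;=\; |\phi'_x(x,\omega)|\cdot|\alpha'(\phi(x,\omega))|,
\]
which is precisely what the chain rule applied to $\phi(x,\omega)=\alpha^{-1}(\varphi(\alpha(x),\omega))$ yields, together with the relation $(\alpha^{-1})'(\varphi(\alpha(x),\omega))=1/\alpha'(\phi(x,\omega))$. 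I would then take a representative of~$f$ satisfying (\ref{e}) on a full-measure subset $I_0\subset I$, note that $\alpha^{-1}(I_0)$ is a full-measure subset of $J$, substitute $y=\alpha(x)$ into (\ref{e}), multiply both sides by $|\alpha'(x)|$, and rewrite the integrand using the above identity together with $f(\varphi(\alpha(x),\omega))=f(\alpha(\phi(x,\omega)))=\widetilde f(\phi(x,\omega))/|\alpha'(\phi(x,\omega))|$. The $|\alpha'(\phi(x,\omega))|$ factors cancel, producing exactly (\ref{e0}) with $\widetilde g=|\alpha'|\cdot g\circ\alpha$.

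The computations are entirely routine; the only point requiring some care is the transfer of the ``almost everywhere'' qualifier. This boils down to the fact that $\alpha$, being a diffeomorphism, carries Lebesgue-null sets to Lebesgue-null sets in both directions, so exceptional sets for (\ref{e}) pull back to exceptional sets of the same nature for (\ref{e0}). I expect no genuine obstacle here; part (i) is needed precisely to make sure that (\ref{e0}) is a well-posed instance of the general equation and hence that the notion of ``$L^1(J)$-solution'' applies to $\widetilde f$.
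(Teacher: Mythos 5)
Your proposal is correct and follows essentially the same route as the paper: part (ii) rests on exactly the chain-rule identity $|\alpha'(x)|\,|\varphi'_x(\alpha(x),\omega)|=|\phi'_x(x,\omega)|\,|\alpha'(\phi(x,\omega))|$ and the substitution $f(\varphi(\alpha(x),\omega))=f(\alpha(\phi(x,\omega)))$ that the paper's displayed computation uses. The only difference is that you spell out the verification of (a)--(c) in part (i), which the paper dismisses as ``clear''; your Fubini argument for (c) is a valid filling-in of that gap.
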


\begin{proof}
Assertion (i) is clear.  To prove assertion (ii) assume that $f$ satisfies (\ref{e}). Clearly, $\widetilde{f},\widetilde{g}\in L^1(J)$. Moreover,
\begin{eqnarray*}
\widetilde{f}(x)&=&|\alpha'(x)|f(\alpha(x))=\int_\Omega|\varphi'_x(\alpha(x),\omega)||\alpha'(x)|f(\varphi(\alpha(x),\omega))P(d\omega)+|\alpha'(x)|g(\alpha(x))\\
&=&\int_\Omega|\phi'_x(x,\omega)| |\alpha'(\phi(x,\omega))|f(\alpha(\phi(x,\omega)))P(d\omega) +\widetilde{g}(x)\\
&=&\int_\Omega|\phi'_x(x,\omega)|\widetilde{f}(\phi(x,\omega))P(d\omega)+\widetilde{g}(x),
\end{eqnarray*}
and the proof is complete.
\end{proof}

Now we are in a position to prove the following result.

\begin{theorem}
Assume $I=\mathbb R$. Let $F\subset\mathbb R$ be a closed and non-degenerated interval such that 
\begin{equation}\label{supp}
{\rm supp}\,g\subset F
\end{equation}
and 
\begin{equation}\label{varphi}
\varphi({\rm int}F\times\{\omega\})={\rm int}F\hspace{5ex}\hbox{for almost all }\omega\in\Omega.
\end{equation} 
Then equation $(\ref{e})$ has a compactly supported $L^1(\mathbb R)$-solution with ${\rm supp}f\subset F$ if and only if there exists a diffeomorphism $\alpha\colon {\rm int}F\to \mathbb R$ such that equation $(\ref{e0})$ with $\phi\colon \mathbb R\times\Omega\to \mathbb R$ given by $(\ref{phi})$ has an $L^1(\mathbb R)$-solution.
\end{theorem}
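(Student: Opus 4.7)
The plan is to use Lemma \ref{lem2} as a two-way translator between equation (\ref{e}) restricted to ${\rm int}\,F$ and equation (\ref{e0}) on $\mathbb{R}$. Since ${\rm int}\,F$ is a non-degenerate open subinterval of $\mathbb{R}$, a diffeomorphism $\alpha$ between ${\rm int}\,F$ and $\mathbb{R}$ always exists and may be chosen freely in the forward implication of the equivalence.

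For the direction ($\Rightarrow$), suppose $f\in L^1(\mathbb{R})$ is a solution of (\ref{e}) with ${\rm supp}\,f\subset F$. First I would pass to the restriction: the function $f_0:=f|_{{\rm int}\,F}$ lies in $L^1({\rm int}\,F)$, as does $g|_{{\rm int}\,F}$ by (\ref{supp}). Condition (\ref{varphi}) says that for almost all $\omega$ the diffeomorphism $\varphi(\cdot,\omega)$ sends ${\rm int}\,F$ bijectively onto itself, so the integrand in (\ref{e}) involves $f$ only on ${\rm int}\,F$ whenever $x\in{\rm int}\,F$. Hence $f_0$ solves an equation of type (\ref{e}) on ${\rm int}\,F$ with $\varphi$ restricted to ${\rm int}\,F\times\Omega$, the hypotheses (a)--(c) being inherited from the original. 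Applying Lemma \ref{lem2}(ii) with the lemma's $I$ replaced by ${\rm int}\,F$ and its $J$ by $\mathbb{R}$, using any chosen diffeomorphism between them, then yields the desired $L^1(\mathbb{R})$-solution of (\ref{e0}).

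For the direction ($\Leftarrow$), suppose a diffeomorphism $\alpha$ is given together with an $L^1(\mathbb{R})$-solution $\widetilde{f}$ of (\ref{e0}). I would apply Lemma \ref{lem2}(ii) once more, now treating (\ref{e0}) as the starting equation---its driving map $\phi$ satisfies (a)--(c) by part (i) of the same lemma---and using the inverse diffeomorphism. A direct substitution into formula (\ref{phi}) shows that the resulting $\phi$-analogue coincides with the original $\varphi$, so one recovers a function $f_0\in L^1({\rm int}\,F)$ satisfying the restriction of (\ref{e}) to ${\rm int}\,F$. Extending $f_0$ by zero to all of $\mathbb{R}$ produces $f\in L^1(\mathbb{R})$ with ${\rm supp}\,f\subset F$, and the remaining task is to verify that $f$ satisfies (\ref{e}) almost everywhere on $\mathbb{R}$.

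This last verification is the main obstacle. On ${\rm int}\,F$ the equation holds since $f=f_0$ there, and by (\ref{varphi}) we may replace $f_0(\varphi(x,\omega))$ by $f(\varphi(x,\omega))$ under the integral; the boundary $\partial F$ has Lebesgue measure zero and is harmless. For $x$ in the open set $\mathbb{R}\setminus F$, condition (\ref{supp}) gives $g(x)=0$, so I must also show that the integral vanishes. The decisive observation is that each $\varphi(\cdot,\omega)$ is a diffeomorphism of $\mathbb{R}$ onto $\mathbb{R}$ by (a) and maps ${\rm int}\,F$ bijectively onto itself for almost all $\omega$ by (\ref{varphi}); by bijectivity it must then map $\mathbb{R}\setminus F$ into $\mathbb{R}\setminus{\rm int}\,F$, where $f$ vanishes. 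Consequently the integrand is zero almost everywhere, and the verification is complete.
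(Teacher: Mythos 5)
Your proposal is correct and follows essentially the same route as the paper: restriction of $f$ to ${\rm int}\,F$ plus Lemma \ref{lem2} in the forward direction, and Lemma \ref{lem2} applied with the inverse diffeomorphism, followed by extension by zero and the observation that each $\varphi(\cdot,\omega)$ maps the complement of ${\rm int}\,F$ into itself, in the converse direction. The only difference is that you make the reverse application of the lemma slightly more explicit than the paper does.
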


\begin{proof}
Assume that $f$ is an $L^1(\mathbb R)$-solution of equation $(\ref{e})$ with ${\rm supp}f\subset F$.
 
Clearly, $f|_{{\rm int}F}\in L^1({\rm int}F)$. Moreover, for almost all $x\in {\rm int}F$ we have
\begin{eqnarray*}
f|_{{\rm int}F}(x)&=&f(x)=\int_\Omega|\varphi'_x(x,\omega)|f(\varphi(x,\omega))P(d\omega)+g(x)\\
&=&\int_\Omega|\varphi'_x(x,\omega)|f|_{{\rm int}F}(\varphi(x,\omega))P(d\omega)+g|_{{\rm int}F}(x),
\end{eqnarray*}
which shows that $f|_{{\rm int}F}$ is an $L^1({\rm int}F)$-solution of equation $(\ref{e})$. Finally, applying Lemma \ref{lem2} with $I={\rm int}F$ and $J=\mathbb R$ we infer that equation $(\ref{e0})$, with $\phi\colon \mathbb R\times\Omega\to \mathbb R$ given by $(\ref{phi})$, has an $L^1(\mathbb R)$-solution.

To prove the converse implication assume that  there is a diffeomorphism $\alpha\colon{\rm int}F\to \mathbb R$ such that equation $(\ref{e0})$, with $\phi\colon \mathbb R\times\Omega\to \mathbb R$ given by $(\ref{phi})$,  has an $L^1(\mathbb R)$-solution. 

Using Lemma \ref{lem2} with $I={\rm int}F$ and $J=\mathbb R$ once again we conclude that equation $(\ref{e})$ has an $L^1({\rm int}F)$-solution $\widetilde{f}$. 

Define a function $f\colon\mathbb R\to\mathbb R$ by putting 
$$
f(x)=\left\{\begin{array}{ccc}
\widetilde{f}(x)&\hbox{for}&x\in {\rm int} F,\\
0&\hbox{for}&x\in\mathbb R\setminus{\rm int} F.
\end{array}
\right.
$$
Clearly, $f\in L^1(\mathbb R)$, and for almost all $x\in{\rm int}F$ we have 
\begin{eqnarray*}
f(x)&=&\widetilde{f}(x)=\int_\Omega|\varphi'_x(x,\omega)|\widetilde{f}(\varphi(x,\omega))P(d\omega)+g(x)\\
&=&\int_\Omega|\varphi'_x(x,\omega)|f(\varphi(x,\omega))P(d\omega)+g(x).
\end{eqnarray*}
If $x\in\mathbb R\setminus{\rm int}F$, then (a) and (\ref{varphi}) imply  $\varphi(x,\omega)\in\mathbb R\setminus{\rm int}F$ for almost all $\omega\in\Omega$. Hence $f(x)=0$ and $f(\varphi(x,\omega))=0$  for all $x\in\mathbb R\setminus{\rm int}F$ and almost all $\omega\in\Omega$. In consequence, (\ref{e}) holds for almost all $x\in\mathbb R\setminus{\rm int}F$, by (\ref{supp}).

The proof is complete.
\end{proof}


\section{$L^1(I)$-solutions as Radon-Nikodym derivatives} 

It is known that integrable solutions of the homogeneous refinement type equation are determined, up to a  multiplicative constant, by the Radon-Nikodym derivative of their integrals over $(-\infty,x]$, where $x\in\mathbb R$, with respect to the one dimensional Lebesgue measure (see \cite[Section 3.4]{KM2013}). We will show that also $L^1(I)$-solutions  of equation (\ref{e}) have such a property. For this purpose fix $x_0\in I$ and put
$$
\Omega_+=\left\{\omega\in\Omega:\varphi'_x(x_0,\omega)>0\right\}
\hspace{5ex}\hbox{and}\hspace{5ex}
\Omega_-=\left\{\omega\in\Omega:\varphi'_x(x_0,\omega)<0\right\}.
$$
From assumption (a) we see that the sets $\Omega_+$ and $\Omega_-$ do not depend on the choice of $x_0$ and $\Omega=\Omega_+\cup\Omega_-$. Next define a function $G\colon I\to\mathbb R$ putting
\begin{equation}\label{G}
G(x)=\int_{\inf I}^x g(t)dt.
\end{equation}

The following extension of Proposition 3.1 from \cite{KM2008} is a useful tool for studying the existence of $L^1(I)$-solutions of equation (\ref{e}).

\begin{lemma}\label{lem1}
Assume $f\in L^1(I)$ and put $\alpha=\int_If(x)dx$.
\begin{itemize}
\item[\rm (i)]  If $F\colon I\to\mathbb R$ given by
\begin{equation}\label{2}
F(x)=\int_{\inf I}^xf(t)dt
\end{equation}
satisfies
\begin{equation}\label{e1}
F(x)=\int_{\Omega_+}F(\varphi(x,\omega)) P(d\omega)+\int_{\Omega_-}[\alpha-F(\varphi(x,\omega))] P(d\omega)+G(x)
\end{equation} 
for every  $x\in I$, then $f$ is an $L^1(I)$-solution of equation $(\ref{e})$.
\item[\rm (ii)] If $f$ is an $L^1(I)$-solution of equation $(\ref{e})$, then the function $F\colon I\to\mathbb R$ given by $(\ref{2})$ satisfies $(\ref{e1})$ for every  $x\in I$.
\end{itemize}
\end{lemma}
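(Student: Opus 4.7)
The plan is to recognise that equation (\ref{e1}) is precisely the integral form of (\ref{e}) on the interval $(\inf I, x)$, so that both parts of the lemma follow from one Fubini-plus-substitution computation traversed in opposite directions. The splitting $\Omega=\Omega_+\cup\Omega_-$ distinguishes the $\omega$ for which $\varphi(\cdot,\omega)$ is orientation-preserving from those for which it is orientation-reversing, and this dictates whether the boundary value of $F\circ\varphi(\cdot,\omega)$ at $\inf I$ is $0$ or $\alpha$.

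For part (ii), I would integrate (\ref{e}) from $\inf I$ to $x$. The first task is to justify swapping the order of integration. Condition (c) is tailored exactly so that for each fixed $\omega$ the classical change of variables $u=\varphi(t,\omega)$ produces $\int_I|\varphi'_x(t,\omega)||f(\varphi(t,\omega))|\,dt=\|f\|_{L^1(I)}$; integrating in $\omega$ and invoking Tonelli shows that $(t,\omega)\mapsto|\varphi'_x(t,\omega)|f(\varphi(t,\omega))$ lies in $L^1(I\times\Omega,\lambda\otimes P)$, so Fubini applies. Then, fixing $\omega$ and restricting the substitution to $(\inf I,x)$, for $\omega\in\Omega_+$ the increasing self-diffeomorphism $\varphi(\cdot,\omega)$ sends $t\to\inf I^+$ to $\varphi(t,\omega)\to\inf I^+$, giving
$$\int_{\inf I}^x|\varphi'_x(t,\omega)|f(\varphi(t,\omega))\,dt=F(\varphi(x,\omega)),$$
while for $\omega\in\Omega_-$ the decreasing diffeomorphism sends $t\to\inf I^+$ to $\varphi(t,\omega)\to\sup I^-$, yielding $\alpha-F(\varphi(x,\omega))$. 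Summing the two contributions and adding $G(x)$ produces (\ref{e1}) for a.e.\ $x$; and since both sides are continuous in $x$ (the right-hand side by dominated convergence, using $|F\circ\varphi(\cdot,\omega)|\le\|f\|_{L^1}$), the identity extends to every $x\in I$.

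For part (i) I would run the same computation backwards. Setting $h(t)=\int_\Omega|\varphi'_x(t,\omega)|f(\varphi(t,\omega))P(d\omega)+g(t)$, the substitution identifies the right-hand side of (\ref{e1}) with $\int_{\inf I}^x h(t)\,dt$. Combined with $F(x)=\int_{\inf I}^x f(t)\,dt$, hypothesis (\ref{e1}) forces $\int_{\inf I}^x(f-h)\,dt=0$ for every $x\in I$, so $f=h$ almost everywhere, which is exactly (\ref{e}).

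The main delicate point will be the change-of-variables step: one must verify that (c) is the precise measure-theoretic condition licensing the one-dimensional substitution formula with $\varphi(\cdot,\omega)$ as the diffeomorphism for almost every $\omega$ and simultaneously guaranteeing joint $\lambda\otimes P$-integrability via Tonelli, and one must carefully track the boundary behaviour of $\varphi(\cdot,\omega)$ at $\inf I$, where the two cases $\Omega_+$ and $\Omega_-$ produce the $0$/$\alpha$ dichotomy that shapes (\ref{e1}). Everything else is bookkeeping.
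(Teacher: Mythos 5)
Your proposal is correct and follows essentially the same route as the paper: the per-$\omega$ change of variables $u=\varphi(t,\omega)$ with the orientation dichotomy $\Omega_+/\Omega_-$ producing the boundary values $0$ and $\alpha$, Fubini to exchange the order of integration, and the vanishing of $\int_{\inf I}^x(f-h)\,dt$ for all $x$ to recover (\ref{e}) almost everywhere. The paper carries out this computation explicitly only for part (i) and dismisses (ii) with ``arguing as above''; your write-up merely adds the (welcome) explicit Tonelli justification for the integrability needed in the Fubini step.
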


\begin{proof}
(i) Assume that $F\colon I\to\mathbb R$ given by (\ref{2}) satisfies (\ref{e1}) for every $x\in I$. 
Then 
\begin{eqnarray*}
\int_{\inf I}^xf(t)dt&=&F(x)=\int_{\Omega_+}\int_{\inf I}^{\varphi(x,\omega)}f(t)dtP(d\omega)\\
&&+\int_{\Omega_-}\left[\int_{\inf I}^{\sup I} f(t)dt- \int_{\inf I}^{\varphi(x,\omega)}f(t)dt \right]P(d\omega)+G(x)\\
&=&\int_{\Omega_+}\int_{\inf I}^{\varphi(x,\omega)}f(t)dt P(d\omega)+
\int_{\Omega_-}\int_{\varphi(x,\omega)}^{\sup I} f(t)dt P(d\omega)+G(x)\\
&=&\int_{\Omega_+}\int_{\inf I}^x\varphi'_x(t,\omega)f(\varphi(t,\omega))dtP(d\omega)\\
&&-\int_{\Omega_-}\int_{\inf I}^x\varphi'_x(t,\omega)f(\varphi(t,\omega))dtP(d\omega)+G(x)\\
&=&\int_{\Omega}\int_{\inf I}^x |\varphi'_x(t,\omega)|f(\varphi(t,\omega))dtP(d\omega)+\int_{\inf I}^x g(t)dt\\
&=&\int_{\inf I}^x\left[\int_{\Omega}|\varphi'_x(t,\omega)|f(\varphi(t,\omega))P(d\omega)+g(t)\right]dt
\end{eqnarray*}
for every $x\in I$. Hence $f$ coincides, in $L^1$ sense, with $\int_{\Omega}|\varphi'_x(\cdot,\omega)|f(\varphi(\cdot,\omega))P(d\omega)+g$. This means that $f$ is an $L^1(I)$-solution of equation (\ref{e}).

(ii) If $f$ is an $L^1(I)$-solution of equation (\ref{e}), then arguing as above we can show that (\ref{e1}) holds with $F\colon I\to\mathbb R$ defined by (\ref{2}).
\end{proof}


\section{$L^1(I)$-solutions in the case where $P(\Omega_+)\in\{0,1\}$} 

In this section we will use a result on the convergence in law of a sequence of iterates of a random-valued function obtained in \cite{B2009}. Thus we assume that there exists $l\in (0,1)$ such that
\begin{equation}\label{a}
\int_\Omega |\varphi(x,\omega)-\varphi(y,\omega)|P(d\omega)\leq l|x-y|\hspace{3ex}
\hbox{ for all }x, y\in I
\end{equation}
and
\begin{equation}\label{b}
\int_\Omega |\varphi(x,\omega)-x|P(d\omega)<+\infty \hspace{3ex}\hbox{ for every }x\in I.
\end{equation}

Note that (\ref{a}) and (a) imply unboundedness of the interval $I$.

Assume that $\inf I\in\mathbb R$ and $\sup I=+\infty$. Then, by (a), we have $\lim_{x\to \inf I}\varphi(x,\omega)\in\{\inf I,+\infty\}$ for every $\omega\in\Omega$. Put
$$
A=\left\{\omega\in\Omega:\lim_{x\to \inf I}\varphi(x,\omega)=+\infty\right\}.
$$
Fixing $y\in I$ and making use of the Lebesgue monotone convergence theorem and (\ref{a}) we obtain
\begin{eqnarray*}
\int_A\lim_{x\to\inf I} |\varphi(x,\omega)-\varphi(y,\omega)|P(d\omega)&=&
\lim_{x\to\inf I}\int_A |\varphi(x,\omega)-\varphi(y,\omega)|P(d\omega)\\
&\leq&\lim_{x\to\inf I}\int_\Omega |\varphi(x,\omega)-\varphi(y,\omega)|P(d\omega)\\
&\leq& l|\inf I-y|<+\infty,
\end{eqnarray*}
and hence $P(A)=0$. In a similar way we can prove that $P(\{\omega\in\Omega:\lim_{x\to \sup I}\varphi(x,\omega)=-\infty\})=0$ in the case where $\inf I=-\infty$ and $\sup I\in\mathbb R$.

The above calculation shows that there is no loss of generality in assuming that $\lim_{x\to \inf I}\varphi(x,\omega)\in\mathbb R$ for every $\omega\in\Omega$ in the case where $\inf I\in\mathbb R$ and $\lim_{x\to \sup I}\varphi(x,\omega)\in\mathbb R$ for every $\omega\in\Omega$ in the case where $\sup I\in\mathbb R$.

Define a function $\varphi_0\colon{\rm cl }I\times\Omega\to{\rm cl}I$ by putting $\varphi_0=\varphi$ in the case where $I=\mathbb R$, or
$$
\varphi_0(x,\omega)=\left\{
\begin{array}{ccc}
\varphi(x,\omega)&\hbox{for}&(x,\omega)\in I\times\Omega,\\
\lim_{y\to x}\varphi(y,\omega)&\hbox{for}&(x,\omega)\in({\rm cl} I\setminus I)\times\Omega,
\end{array}
\right.
$$
in the case where $I\neq\mathbb R$. Note that both $\varphi$ and $\varphi_0$ are random-valued functions, and $\varphi_0(x,\omega)\in{\rm cl}I\setminus I$ for all $x\in{\rm cl}I\setminus I$ and $\omega\in\Omega$. Moreover, (\ref{a}) and the Fatou lemma imply
\begin{equation}\label{aa}
\int_\Omega |\varphi_0(x,\omega)-\varphi_0(y,\omega)|P(d\omega)\leq l|x-y|\hspace{3ex}
\hbox{ for all }x, y\in {\rm cl}I,
\end{equation}
which jointly with (\ref{b}) gives
\begin{equation}\label{bb}
\int_\Omega |\varphi_0(x,\omega)-x|P(d\omega)<+\infty \hspace{3ex}\hbox{ for every }x\in {\rm cl}I.
\end{equation}

Denote by $\pi_n(x,\cdot)$ the distribution of $\varphi_0^n(x,\cdot)$, i.e., 
$$
\pi_n(x,B)=P^\infty(\{\varpi\in\Omega^\infty:\varphi_0^n(x,\varpi)\in B\})
$$ 
for all  $n\in\mathbb N$, $x\in \mathbb R$ and  $B\in{\mathcal B}({\rm cl}I)$.

The following result on the convergence in law of the sequence $(\varphi_0^n(x,\cdot))_{n\in\mathbb N}$ will be our main tool.

\begin{theorem}{\rm (see \cite[Theorem 3.1]{B2009})}\label{thm1}
Assume $(\ref{bb})$ and let $(\ref{aa})$ hold with some $l\in(0,1)$. Then there exists a distribution $\pi$ on ${\rm cl}I$ such that for every $x\in{\rm cl}I$ the sequence $(\pi_n(x,\cdot))_{n\in\mathbb N}$ converges weakly to $\pi$.
\end{theorem}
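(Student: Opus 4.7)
The plan is to reduce the question to $L^1(P^\infty)$-convergence of a reversed iterate. Because the coordinates of $\varpi$ are i.i.d.\ under $P^\infty$, a relabelling $\omega_i\leftrightarrow\omega_{n+1-i}$ gives the distributional identity $\varphi_0^n(x,\cdot)\stackrel{d}{=}\widetilde{\varphi}_0^{\,n}(x,\cdot)$, where I define the \emph{backward} iterate
$$
\widetilde{\varphi}_0^{\,n}(x,\varpi)=\varphi_0(\cdot,\omega_1)\circ\varphi_0(\cdot,\omega_2)\circ\cdots\circ\varphi_0(\cdot,\omega_n)(x).
$$
It therefore suffices to show that $\widetilde{\varphi}_0^{\,n}(x,\cdot)$ converges in $L^1(P^\infty)$ (hence in law) to a random variable whose distribution does not depend on $x$, and to call that distribution $\pi$.

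A straightforward induction on $n$, using Fubini together with independence of the coordinates $\omega_1,\dots,\omega_n$, promotes (aa) to the iterated bound
$$
\int_{\Omega^\infty}\bigl|\widetilde{\varphi}_0^{\,n}(x,\varpi)-\widetilde{\varphi}_0^{\,n}(y,\varpi)\bigr|\,dP^\infty(\varpi)\leq l^n|x-y|,\qquad x,y\in{\rm cl}\,I.
$$
The key identity $\widetilde{\varphi}_0^{\,n+1}(x,\varpi)=\widetilde{\varphi}_0^{\,n}(\varphi_0(x,\omega_{n+1}),\varpi)$, combined with the independence of $\omega_{n+1}$ from $(\omega_1,\dots,\omega_n)$ and conditioning on $\omega_{n+1}$, then yields
$$
\int_{\Omega^\infty}\bigl|\widetilde{\varphi}_0^{\,n+1}(x,\varpi)-\widetilde{\varphi}_0^{\,n}(x,\varpi)\bigr|\,dP^\infty(\varpi)\leq l^n\int_\Omega|\varphi_0(x,\omega)-x|\,dP(\omega),
$$
which is finite by (bb). Since $\sum_n l^n<\infty$, the increments form a summable series in $L^1(P^\infty)$, so $(\widetilde{\varphi}_0^{\,n}(x,\cdot))_{n\in\mathbb N}$ is Cauchy in $L^1(P^\infty)$ and converges there (and, by Borel--Cantelli, $P^\infty$-a.s.) to some limit $\widetilde{\varphi}_0^{\,\infty}(x,\cdot)$.

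Passing to the limit in the contraction estimate gives
$$\int_{\Omega^\infty}|\widetilde{\varphi}_0^{\,\infty}(x,\varpi)-\widetilde{\varphi}_0^{\,\infty}(y,\varpi)|\,dP^\infty(\varpi)\leq l^n|x-y|$$
for every $n$; since $l<1$ the right-hand side tends to $0$, so $\widetilde{\varphi}_0^{\,\infty}(x,\cdot)=\widetilde{\varphi}_0^{\,\infty}(y,\cdot)$ $P^\infty$-a.s. Let $\pi$ be the common distribution. $L^1(P^\infty)$-convergence implies convergence in law, so $\widetilde{\varphi}_0^{\,n}(x,\cdot)\Rightarrow\pi$ for every $x\in{\rm cl}\,I$, and the distributional identity from the first paragraph then gives $\pi_n(x,\cdot)\Rightarrow\pi$ for every $x\in{\rm cl}\,I$, as required.

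The main obstacle I expect is the very first step: the forward iterates $\varphi_0^n(x,\cdot)$ themselves are typically \emph{not} Cauchy in $L^1(P^\infty)$ (only in distribution), so one really must switch to the backward composition $\widetilde{\varphi}_0^{\,n}$ and then make sure Fubini, i.i.d.-symmetry, and the conditioning in the telescoping estimate are applied in the correct order. Once the backward/forward swap is in place, the remaining argument is the textbook contraction-plus-completeness scheme.
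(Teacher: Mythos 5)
Your argument is correct. Note first that the paper itself offers no proof of this statement: it is imported verbatim from \cite[Theorem 3.1]{B2009}, so there is no in-paper argument to compare against. What you give is a complete, self-contained proof by the classical backward-iteration (Letac) principle: reverse the i.i.d.\ coordinates to replace the forward iterate by the backward composition, show the backward iterates are Cauchy in $L^1(P^\infty)$ via the telescoping bound $l^n\int_\Omega|\varphi_0(x,\omega)-x|\,P(d\omega)$ (finite by (\ref{bb})), identify the limit as independent of $x$ using the contraction estimate, and transfer back through equality in distribution. All the delicate points you flag are handled correctly: the coordinate reversal is measure-preserving for the product measure, the conditioning/Fubini step is legitimate because the iterates are product-measurable random-valued functions, and (\ref{bb}) at a single point anchors integrability of $\widetilde{\varphi}_0^{\,1}(x,\cdot)$ so that the $L^1$-Cauchy claim makes sense. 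Two cosmetic remarks: the sentence ``passing to the limit in the contraction estimate gives $\dots\leq l^n|x-y|$ for every $n$'' should read that the (fixed) limit quantity is bounded by $\inf_n l^n|x-y|=0$; and you should observe that the a.s.\ limit lies in ${\rm cl}\,I$ because ${\rm cl}\,I$ is closed, so that $\pi$ is indeed a distribution on ${\rm cl}\,I$. For comparison, the proof in \cite{B2009} is usually presented in the dual form --- showing that $(\pi_n(x,\cdot))_{n\in\mathbb N}$ is Cauchy in the Fortet--Mourier (bounded-Lipschitz) metric on probability measures, using the Markov recursion $\pi_{n+1}(x,B)=\int_\Omega\pi_n(\varphi_0(x,\omega),B)\,P(d\omega)$ and completeness of that metric space --- which avoids constructing an almost sure limit but yields only weak convergence; your route gives the stronger a.s./$L^1$ convergence of the backward iterates as a by-product.
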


To formulate our first result of this section let us denote by $D$ the probability distribution function of the distribution $\pi$ obtained in Theorem \ref{thm1}, i.e., 
$$
D(t)=\pi([\inf I,t]\cap\mathbb R)
$$
 for every $t\in{\rm cl}I$.
 
\begin{theorem}\label{thm2}
Assume $P(\Omega_+)=1$. Let $(\ref{b})$ hold and let $(\ref{a})$ be satisfied  with some $l\in (0,1)$. If 
\begin{equation}\label{D}
\int_{I}g(t)(1-D(t))dt=0
\end{equation}
and if there exists $L\in(0,+\infty)$ such that 
\begin{equation}\label{gg}
\left|\int_x^y g(t)dt\right|\leq L|x-y|\hspace{3ex}\hbox{ for all }x,y\in I, 
\end{equation}
then equation $(\ref{e1})$ has an absolutely continuous solution $F\colon I\to\mathbb R$.

Moreover, if there exists $f\in L^1(I)$ such that $(\ref{2})$ holds for every $x\in I$, then $f$ is an $L^1(I)$-solution of equation $(\ref{e})$.
\end{theorem}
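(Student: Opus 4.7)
The plan is to exploit the fact that since $P(\Omega_+)=1$, equation (\ref{e1}) reduces to the linear fixed-point equation $F = QF + G$, where $(QH)(x) := \int_\Omega H(\varphi_0(x,\omega))\,dP(\omega)$, and to construct $F$ as the Neumann-type series
$$
F(x) = \sum_{n=0}^\infty H_n(x), \qquad H_n := Q^n G, \;\;H_n(x) = \int_{\Omega^\infty} G(\varphi_0^n(x,\varpi))\,dP^\infty(\varpi).
$$
I would work throughout on ${\rm cl}\,I$, observing that $G$ extends by continuity to a function that is bounded ($|G|\le \|g\|_{L^1}$) and, by (\ref{gg}), Lipschitz with constant $L$.

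Two preliminary facts are needed. First, Fubini applied to $\int_I g(s)(1-D(s))\,ds$ shows that (\ref{D}) is equivalent to $\int G\,d\pi = 0$. Second, the distribution $\pi$ supplied by Theorem \ref{thm1} is $Q$-invariant: since $QH$ is bounded continuous whenever $H$ is (by (a) and dominated convergence), weak convergence of $\pi_n(x,\cdot)$ combined with the Markov identity $\int H\,d\pi_{n+1}(x,\cdot)=\int QH\,d\pi_n(x,\cdot)$ passes to the limit. Iterating then gives $\int H_n\,d\pi = \int G\,d\pi = 0$ for every $n$.

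The heart of the argument is a geometric bound for $H_n$. An easy induction using (\ref{aa}) yields $\int|\varphi_0^n(x,\varpi)-\varphi_0^n(y,\varpi)|\,dP^\infty(\varpi) \le l^n|x-y|$, so $H_n$ is Lipschitz with constant $Ll^n$. Combined with $\int H_n\,d\pi=0$ this produces
$$
|H_n(x)| = \left|\int_{{\rm cl}\,I}(H_n(x)-H_n(y))\,d\pi(y)\right| \le Ll^n\int_{{\rm cl}\,I}|x-y|\,d\pi(y).
$$
To show the right-hand integral is finite, set $h(y_0):=\int_\Omega|\varphi_0(y_0,\omega)-y_0|\,dP(\omega)$, which is finite by (\ref{bb}). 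The Markov property and (\ref{aa}) give the recursion $E|\varphi_0^{n+1}(x,\varpi)-y_0| \le l\,E|\varphi_0^n(x,\varpi)-y_0| + h(y_0)$, whence
$$
\sup_n E\bigl|\varphi_0^n(x,\varpi)-y_0\bigr| \le |x-y_0|+\frac{h(y_0)}{1-l},
$$
and Fatou transfers this bound to $\pi$. Summing the geometric series then shows that $F$ is well defined, Lipschitz with constant at most $L/(1-l)$, hence absolutely continuous; a dominated-convergence swap of sum and integral verifies that $F$ solves the reduced form of (\ref{e1}).

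For the "moreover" assertion, once $F$ satisfies (\ref{e1}) and has the form $F(x)=\int_{\inf I}^x f(t)\,dt$ for some $f\in L^1(I)$, Lemma \ref{lem1}(i) immediately yields that $f$ is an $L^1(I)$-solution of (\ref{e}). The main obstacle is the geometric decay of $H_n$: both (\ref{D}) (providing mean zero against $\pi$) and (\ref{gg}) (transporting Lipschitz control through the contraction (\ref{a})) are essential --- without (\ref{D}) the series diverges since $H_n(x)\to\int G\,d\pi\ne 0$, and without (\ref{gg}) Theorem \ref{thm1} would yield only pointwise vanishing with no summable rate.
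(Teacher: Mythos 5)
Your proof is correct, but it takes a more self-contained route than the paper. The paper reduces (\ref{e1}) to the fixed-point equation $F=QF+G$ on ${\rm cl}\,I$ exactly as you do, but then it only verifies the single condition $\lim_{n\to\infty}\int_{\Omega^\infty}G(\varphi_0^n(x_0,\varpi))\,P^\infty(d\varpi)=0$ --- obtained from the Fubini computation $\int_{\Omega^\infty}G(\varphi_0^n(x_0,\cdot))\,dP^\infty=\int_Ig(t)\,dt-\int_Ig(t)D_n(t)\,dt$ together with $D_n\to D$ at continuity points of $D$, dominated convergence and (\ref{D}) --- and then invokes \cite[Corollary 4.1 (iii)]{B2009} to produce the absolutely continuous solution. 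You instead unfold that citation: you identify (\ref{D}) with $\int G\,d\pi=0$ via the invariance of $\pi$, prove the geometric Lipschitz decay $Ll^n$ of $H_n=Q^nG$ from (\ref{aa}) and (\ref{gg}), control $\int_{{\rm cl}\,I}|x-y|\,d\pi(y)$ by the moment recursion coming from (\ref{bb}), and sum the Neumann series. Each step is sound (the $\pi$-invariance argument via the Chapman--Kolmogorov identity, the Fatou transfer of the uniform first-moment bound to $\pi$, and the dominated-convergence interchange needed to check $F=QF+G$ all go through), and your argument has the advantage of being quantitative --- it exhibits $F$ as Lipschitz with constant $L/(1-l)$ --- whereas the paper's proof is shorter because the construction of the solution from condition (\ref{9}) is delegated to Baron's result. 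Your handling of the ``moreover'' part via Lemma \ref{lem1}(i) coincides with the paper's.
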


\begin{proof}
We first observe that it is enough to show that equation
\begin{equation}\label{e2}
F(x)=\int_{\Omega}F(\varphi_0(x,\omega)) P(d\omega)+G(x),
\end{equation}
where $G\colon{\rm cl}I\to\mathbb R$ is defined by (\ref{G}), has an absolutely continuous solution $F\colon{\rm cl}I\to\mathbb R$; indeed, if $F$ is such a solution, then $F|_I$ is an absolutely continuous solution of equation (\ref{e1}).

Fix $x_0\in {\rm cl}I$. We will show that
\begin{equation}\label{9}
\lim_{n\to\infty}\int_{\Omega^\infty}\int_{\inf I}^{\varphi_0^n(x_0,\varpi)}g(t)dtP^\infty(d\varpi)=0.
\end{equation}

For every $n\in\mathbb N$ denote by $D_n$ the probability distribution function of the distribution $\pi_n(x_0,\cdot)$, i.e., 
$$
D_n(t)=P^\infty(\{\varpi\in\Omega^\infty:\varphi_0^n(x_0,\varpi)\leq t\})
$$
 for every $t\in{\rm cl}I$. 
 
 Applying the Fubini theorem we obtain
\begin{eqnarray*}
\int_{\Omega^\infty}\int_{\inf I}^{\varphi_0^n(x_0,\varpi)}\!\!\!g(t)dtP^\infty(d\varpi)
&=&\int_{\{(t,\varpi)\in{\rm cl}I\times\Omega^\infty: t< \varphi_0^n(x_0,\varpi)\}}g(t)dt P^\infty(d\varpi)\\
&=&\int_I\int_{\{\varpi\in\Omega^\infty:t< \varphi_0^n(x_0,\varpi)\}}g(t)P^\infty(d\varpi)dt\\
&=&\int_Ig(t)\big[1-P^\infty(\{\varpi\in\Omega^\infty: \varphi_0^n(x_0,\varpi)\leq t\})\big]dt\\
&=&\int_Ig(t)dt -\int_Ig(t)D_n(t)dt
\end{eqnarray*}
for every $n\in\mathbb N$. From Theorem \ref{thm1} we conclude that for every $t\in{\rm cl}I$, being a point of continuity of $D$, we have $\lim_{n\to\infty}D_n(t)=D(t)$. Hence $\lim_{n\to\infty}g(t)D_n(t)=g(t)D(t)$ for almost all $t\in{\rm cl}I$, and by the Lebesgue dominated convergence theorem and (\ref{D}) we obtain
$$
\lim_{n\to\infty}\int_I g(t)D_n(t)dt=\int_I g(t)D(t)dt=\int_I g(t)dt.
$$
In consequence, (\ref{9}) holds. 

Finally, according to \cite[Corollary 4.1 (iii)]{B2009} equation (\ref{e2}) has  an absolutely continuous solution $F\colon{\rm cl}I\to\mathbb R$. 

The moreover statement follows from Lemma \ref{lem1}.
\end{proof}

According to \cite[Theorem 3.1 (ii)]{BKM} we have the following counterpart of Theorem \ref{thm2}.

\begin{theorem}\label{thm3}
Assume $P(\Omega_+)=0$. Let $(\ref{b})$ hold and let $(\ref{a})$ be satisfied  with some $l\in (0,1)$. If  there exists $L\in(0,+\infty)$ such that $(\ref{gg})$ holds, then equation $(\ref{e1})$ has an absolutely continuous solution $F\colon I\to\mathbb R$.

Moreover, if there exists $f\in L^1(I)$ such that $\int_I f(x)dx=\alpha$ and $(\ref{2})$ holds for every $x\in I$, then $f$ is an $L^1(I)$-solution of equation $(\ref{e})$.
\end{theorem}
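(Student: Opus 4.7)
The plan is to mirror the structure of the proof of Theorem~\ref{thm2}, but to exploit the ``reflection'' produced by $P(\Omega_+)=0$, which should eliminate the need for any compatibility condition analogous to~(\ref{D}).

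First, I would observe that $P(\Omega_+)=0$ forces $P(\Omega_-)=1$, so equation~(\ref{e1}) simplifies to
\begin{equation*}
F(x) = \alpha - \int_\Omega F(\varphi(x,\omega))\,P(d\omega) + G(x) \qquad \text{for all } x \in I.
\end{equation*}
Following the same construction as in the proof of Theorem~\ref{thm2}, I would extend $\varphi$ to the random-valued function $\varphi_0 \colon {\rm cl}\,I \times \Omega \to {\rm cl}\,I$; by~(\ref{a}) and~(\ref{b}), the extension satisfies~(\ref{aa}) and~(\ref{bb}). It then suffices to produce an absolutely continuous solution on ${\rm cl}\,I$ of the extended equation
\begin{equation*}
F(x) = \alpha - \int_\Omega F(\varphi_0(x,\omega))\,P(d\omega) + G(x),
\end{equation*}
since the restriction of any such $F$ to $I$ will solve~(\ref{e1}).

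Next, I would note that~(\ref{gg}) makes $G$ Lipschitz on $I$, hence absolutely continuous, which supplies the regularity hypothesis needed to invoke \cite[Theorem 3.1 (ii)]{BKM}, the result cited in the sentence preceding the theorem. That theorem is designed precisely for the ``reflection'' case $P(\Omega_-)=1$ and delivers an absolutely continuous $F$ with no analogue of the balance condition~(\ref{D}). Heuristically, no such compatibility is required because the affine operator $F \mapsto \alpha - \int_\Omega F(\varphi_0(\cdot,\omega))\,P(d\omega) + G$, when iterated twice, becomes a genuine contraction in a suitable norm (the two sign flips cancel), so a Banach-type fixed-point argument yields a unique solution unconditionally.

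The moreover statement is then immediate from Lemma~\ref{lem1}(i): if $f \in L^1(I)$ satisfies $\int_I f\,dx = \alpha$ and~(\ref{2}), then the $F$ it defines satisfies~(\ref{e1}) for every $x \in I$, and so $f$ is an $L^1(I)$-solution of~(\ref{e}). The main obstacle---and essentially the only nontrivial step beyond this reduction---is verifying that the hypotheses of \cite[Theorem 3.1 (ii)]{BKM} match the present setting after passage to $\varphi_0$ on ${\rm cl}\,I$; granted that the cited theorem is formulated as advertised, this matching is a direct translation.
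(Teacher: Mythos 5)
Your proposal matches the paper's approach: the paper gives no proof of Theorem~\ref{thm3} at all, simply asserting that it follows from \cite[Theorem 3.1 (ii)]{BKM}, and your reduction (passing to $\varphi_0$ on ${\rm cl}\,I$, noting that $P(\Omega_-)=1$ turns (\ref{e1}) into the ``reflected'' equation, checking that (\ref{gg}) gives the Lipschitz regularity of $G$, and finishing with Lemma~\ref{lem1}) is exactly the implicit content of that one-line justification. Your heuristic about the twice-iterated operator being a contraction is unverified here since \cite{BKM} is an unpublished manuscript, but the paper itself offers nothing more, so your write-up is, if anything, more explicit than the original.
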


At the end of this paper it is worth noting that if an absolutely continuous  function $F\colon I\to [0,1]$ is nondecreasing, then it has a Radon-Nikodym derivative. However, not all absolutely continuous real functions have Radon-Nikodym derivatives (see \cite[Example 3.4]{KMa}).

\section*{Acknowledgement}
This research was supported by the University of Silesia Mathematics Department (Iterative Functional Equations and Real Analysis program).

\end{document}